\title[Eichler-Shimura relations for local Shimura varieties]
{Eichler-Shimura relations for local Shimura varieties}
\author{Teruhisa Koshikawa}
\address{Research Institute for Mathematical Sciences, Kyoto University}
\email{teruhisa@kurims.kyoto-u.ac.jp}
\theoremstyle{plain}
\newtheorem{thm}{Theorem}[section]
\newtheorem{prop}[thm]{Proposition}
\newtheorem{cor}[thm]{Corollary}
\theoremstyle{definition}
\newtheorem{rem}[thm]{Remark}
\newcommand\bA{\mathbb A}
\newcommand\bC{\mathbf C}
\newcommand\bF{\mathbf F}
\newcommand\bQ{\mathbf Q}
\newcommand\bZ{\mathbf Z}
\newcommand\cC{\mathcal C}
\newcommand\cD{\mathcal D}
\newcommand\cG{\mathcal G}
\newcommand\cM{\mathcal M}
\newcommand\cO{\mathcal O}
\newcommand\cV{\mathcal V}
\newcommand\cZ{\mathcal Z}
\DeclareMathOperator{\GL}{GL}		
\DeclareMathOperator{\Frob}{Frob}
\DeclareMathOperator{\End}{End}
\DeclareMathOperator{\Gal}{Gal}
\DeclareMathOperator{\Hom}{Hom}
\DeclareMathOperator{\Rep}{Rep}
\DeclareMathOperator{\id}{id}
\DeclareMathOperator{\cInd}{c-Ind}
\DeclareMathOperator{\Bun}{Bun}
\DeclareMathOperator{\Sh}{Sh}
\DeclareMathOperator{\Ext}{Ext}
\DeclareMathOperator{\op}{op}
\DeclareMathOperator{\lis}{lis}
\DeclareMathOperator{\Perf}{Perf}
\begin{document}

\begin{abstract}
In this note, we discuss several forms of the Eichler-Shimura relation for the compactly supported cohomology of local Shimura varieties, using the work of Fargues-Scholze. 
\end{abstract}

\maketitle

\section{Introduction}
We study the action of the Weil group on the compactly supported cohomology of local Shimura varieties using the work of Fargues-Scholze \cite{FS}. 
We first state a special case where the statement is relatively explicit and a direct analogue of the usual Eichler-Shimura relation. 

Fix a prime number $p$. 
Let $F$ be a finite extension of $\bQ_p$. 
Let $(G, b, \mu)$ a local Shimura datum over $F$ \cite{RV}*{5.1}. (We are working over $F$ rather than $\bQ_p$, but one has the associated local Shimura datum over $\bQ_p$; see \cite{RV}*{5.2}.)
Let $E\subset \overline{F}$ denote the reflex field and put $C:=\widehat{\overline{E}}$. 
For an open compact subgroup $K\subset G(F)$, let $\cM_{(G, b, \mu), K}$ denote the corresponding local Shimura variety \cite{Scholze:Berkeley}*{24.1.3}. Consider the compactly supported cohomology $R\Gamma_c (\cM_{(G, b, \mu), K}, \bZ_{\ell})$ for $\ell \neq p$ (taken after base change to $C$), equipped with left actions of the Weil group $W_E$, $J_b (F)$, and the \emph{right} action of the Hecke algebra $\bZ_{\ell}[K \backslash G(F) / K]$. 
By the usual recipe, we have a representation $r_{\mu}$ of $\widehat{G}\rtimes W_{E}$ whose highest weight is conjugate to the character corresponding to $\mu$. 
Set $d=\dim r_{\mu}$.  
The Kottwitz conjecture describes the ``discrete'' part of the cohomology using $L$-parameters composed with (the dual of) $r_{\mu}$, while we will consider the non-discrete part as well.

Assume $G$ is unramified over $F$ with a reductive model $\cG$ over $\cO_F$. Let $q$ denote the cardinality of the residue field of $F$ and $I\subset \cG(\cO_F)$ denote the the Iwahori subgroup. 
We have the polynomial, associated with the minuscule cocharacter $\mu$, 
\[
H_{G, \mu} \in \bZ[1/q][\cG(\cO_F)\backslash G(F) / \cG(\cO_F)][X]
\]
with coefficients in the Hecke algebra $\bZ[1/q][\cG(\cO_F)\backslash G(F) / \cG(\cO_F)]$; cf.~\cite{Wedhorn}*{2.6, 2.8}. (It is the one denoted by $H_{G, [\mu_E]}$ there. Actually it is only claimed that coefficients are in $\bQ[\cG(\cO_F)\backslash G(F) / \cG(\cO_F)]$ but the same argument works.). 

Regard the spherical Hecke algebra $\bC[\cG(\cO_F)\backslash G(F) / \cG(\cO_F)]$ as the ring of regular functions on irreducible unramified $\bC$-representations $\pi$ of $G (F)$. Then one can define $H_{G, \mu}$ as the function
\[
\pi \mapsto \det (X- q^{ [E:F] d/2}r_{\mu}\circ \varphi_{\pi}(\Frob_E)),
\]
where $\varphi_{\pi}\colon W_F \to \widehat{G}(\bC)\rtimes W_F$ is the $L$-parameter of $\pi$. 

The Satake isomorphism and the Bernstein isomorphism hold with coefficients in $\bZ[q^{-1/2}]$, and they are compatible. (For example, see \cite{Vigneras}*{B.4} for the Satake isomorphism, and the appendix of \cite{Haines} for the Bernstein isomorphism and the compatibility, at least with coefficients in $\bC$. These are enough to deduce the case with coefficients in $\bZ[q^{-1/2}]$.)
In particular, the compatibility gives an isomorphism 
\[
\cZ (\bZ [q^{-1/2}][I \backslash G(F) / I]) \xrightarrow[\cong]{* e_{\cG (\cO_F)}} 
\bZ[q^{-1/2}][\cG(\cO_F) \backslash G(F) / \cG (\cO_F)], 
\]
where $\cZ (-)$ denotes the center. 
Using this isomorphism, we may regard $H_{G, \mu}$ as an element of $\cZ (\bZ [q^{-1/2}][I \backslash G(F) / I])$. 
More generally, we may regard $H_{G, \mu}$ as an element of $\cZ (\bZ [q^{-1/2}][K \backslash G(F) / K])$
for any open compact subgroup $K$ containing $I$, via the map
\[
\cZ (\bZ [q^{-1/2}][I \backslash G(F) / I]) \xrightarrow{*e_{K}} 
\cZ (\bZ [q^{-1/2}][K \backslash G(F) / K]). 
\]

\begin{thm}\label{main}
Assume $K$ contains the Iwahori subgroup $I$. 
Then, for any Frobenius lift $\sigma\in W_E$, the relation
\[
E_{G, \mu} (\sigma) =0
\]
holds in $\End (R\Gamma_c (\cM_{(G, b, \mu), K}, \bZ_{\ell}[q^{1/2}]))$. 
Moreover, the action of any element $\gamma$ of the inertia subgroup $I_E$ on $R\Gamma_c (\cM_{(G, b, \mu), K}, \bZ_{\ell}[q^{1/2}])$ satisfies $ (\gamma -\id)^d=0$. 
In particular, the wild inertia subgroup $P_E$ acts trivially on $H^i_c (\cM_{(G, b, \mu), K}, \bQ_{\ell})$ for any $i$.
\end{thm}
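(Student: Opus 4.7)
The plan is to derive all three assertions from the Fargues--Scholze spectral action, which to each $V \in \Rep(\widehat{G} \rtimes W_E)$ attaches a Hecke operator $T_V$ on $D_{\lis}(\Bun_G, \bZ_{\ell}[q^{1/2}])$, compatibly with the natural $W_E$-action on $V$. First I would realize $R\Gamma_c (\cM_{(G, b, \mu), K}, \bZ_{\ell}[q^{1/2}])$ inside this framework: it should appear as the cohomology of $T_{r_\mu}$ applied to a suitable sheaf on $\Bun_G$ supported at the stratum corresponding to $b$ and incorporating the level-$K$ structure, in such a way that the $W_E$-action on cohomology coincides with the spectral $W_E$-action coming from $r_\mu$.

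For the Eichler--Shimura relation, the next step is to apply the Cayley--Hamilton theorem to the $d$-dimensional representation $r_\mu$: the endomorphism $r_\mu(\sigma)$ satisfies $\det(X - r_\mu(\sigma)) = 0$. Transported through the spectral action, this yields a polynomial identity $P(\sigma) = 0$ in $\End(R\Gamma_c)$. The coefficients of $P$, which are a priori $\widehat{G}$-invariant functions of the Satake parameter of $\sigma$, become elements of the spherical Hecke algebra via the Satake isomorphism. Comparing with the defining formula $\pi\mapsto \det(X- q^{[E:F]d/2}r_\mu\circ\varphi_\pi(\Frob_E))$ recalled in the introduction, and pushing into $\cZ(\bZ[q^{-1/2}][K\backslash G(F)/K])$ via the Bernstein isomorphism and the $*e_K$ maps, identifies $P$ with $H_{G, \mu}$; this gives the claimed vanishing.

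For the inertia statement, the analogous Cayley--Hamilton applied to $r_\mu(\gamma)$ for $\gamma \in I_E$ yields $(\gamma - \id)^d = 0$ once one knows the characteristic polynomial of $r_\mu(\gamma)$ is $(X-1)^d$. This should follow from the fact that, since $G$ is unramified over $F$, the $W_E$-action on $\widehat{G}$ factors through an unramified quotient; in particular $I_E$ acts trivially on $\widehat{G}$, and combined with the construction of $r_\mu$ this forces $r_\mu|_{I_E}$ to be unipotent. The wild-inertia assertion then follows formally: unipotence of the pro-$p$ group $P_E$ on $H^i_c(\cM_{(G,b,\mu),K}, \bQ_{\ell})$ forces the image in $\GL(H^i_c)$ to be finite (any continuous map from a pro-$p$ group to $\GL_n(\bQ_\ell)$ with $\ell \neq p$ factors through a finite quotient, as pro-$p$ groups admit no nontrivial continuous maps to pro-$\ell$ groups), hence consisting of unipotent elements of finite order in characteristic zero, hence trivial.

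The hard part will be the first step: setting up the Fargues--Scholze realization of $R\Gamma_c$ with the correct $W_E$-equivariance, and verifying that the Cayley--Hamilton coefficients on the operator side really match $H_{G, \mu}$ including the twist by $q^{[E:F]d/2}$. The inertia step also needs a careful argument that $r_\mu|_{I_E}$ acts unipotently (with nilpotency index at most $d$) rather than merely through roots of unity; this is where the unramifiedness of $G$ enters essentially.
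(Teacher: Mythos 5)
Your Frobenius half follows the paper's route in spirit: Cayley--Hamilton applied through the spectral action, with the characteristic-polynomial coefficients landing in the center of the Hecke algebra and then identified with the coefficients of $H_{G,\mu}$ (this is Proposition \ref{Eichler-Shimura} and Theorem \ref{Hecke} in the paper). Two remarks there: the level-$K$ sheaf $\cInd^{G(F)}_K \bZ_{\ell}[q^{1/2}]$ lives on the \emph{trivial} stratum $\Bun^1_G$, and $R\Gamma_c(\cM_{(G,b,\mu),K})$ is $i^{b*}T_{r_\mu}(j_!\cInd^{G(F)}_K\Lambda)$ up to shift and twist, not $T_{r_\mu}$ of a sheaf supported on the $b$-stratum; and the step you flag as ``the hard part'' --- matching the coefficients with $H_{G,\mu}$ --- requires precisely the Fargues--Scholze compatibility $\varphi_\pi^{FS}=\varphi_\pi$ for unramified $\pi$ (reduction to unramified tori and parabolic induction), since $H_{G,\mu}$ is defined via the classical Satake parameter while the Cayley--Hamilton coefficients are excursion operators.

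The genuine gap is in the inertia statement. The action of $\gamma\in I_E$ on the cohomology is \emph{not} $r_\mu(\gamma)$, nor is it controlled by $r_\mu|_{I_E}$ alone: via the spectral action, $\gamma$ acts on the vector bundle $\cV_{r_\mu}$ over $Z^1(W_E,\widehat G)/\widehat G$ at a parameter $\varphi$ by $r_\mu(\varphi(\gamma)\rtimes\gamma)$, so the characteristic polynomial has coefficients the excursion operators $S_{\wedge^i r_\mu,\gamma}$, which are nonconstant regular functions on the stack of parameters; they equal the binomial coefficients $\dim\wedge^i r_\mu$ only at \emph{unramified} parameters. Unramifiedness of $G$ (hence triviality of the $I_E$-action on $\widehat G$) does not make $r_\mu\circ\varphi$ unipotent on $I_E$ for ramified cocycles $\varphi$. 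The hypothesis $K\supset I$, which your argument never uses, is what saves the statement: the excursion operators act on $\cInd^{G(F)}_K\Lambda$ through the center of the Iwahori--Hecke algebra, i.e.\ through their values at representations with Iwahori-fixed vectors, whose Fargues--Scholze parameters are unramified, whence $S_{\wedge^{d-i}r_\mu,\gamma}$ specializes to $\dim\wedge^{d-i}r_\mu$ and the relation of Theorem \ref{Hecke} becomes $(\gamma-\id)^d=0$. As written, your argument would prove unipotence of inertia at arbitrary level, which is false (e.g.\ the Lubin--Tate tower at deep level realizes ramified representations of $W_E$). Your wild-inertia step (pro-$p$ image in $\GL_n(\overline\bQ_\ell)$ is finite, finite-order unipotent in characteristic $0$ is trivial) is fine once unipotence is correctly established; the paper instead quotes that the action factors through a finite quotient of $P_E$.
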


The classical Eichler-Shimura relation, or the congruence relation, has been conjectured and proved for several Shimura varieties, e.g., \cite{Blasius-Rogawski}, \cite{Wedhorn}. 
Theorem \ref{main} is an analogue for local Shimura varieties such as unramified Rapoport-Zink spaces. 
We discuss in Section \ref{global} how Theorem \ref{main} is related to the setting of Shimura varieties. 

\begin{rem}
Let $\iota$ denote the involution
\[
\bZ_{\ell} [K \backslash G(F) / K] \ni f \xrightarrow{\iota} (KgK \mapsto f (Kg^{-1}K)) \in 
\bZ_{\ell} [K \backslash G(F) / K]^{\op}. 
\]
Using $\iota$, we can identify a left module of $\bZ_{\ell} [K \backslash G(F) / K]$ with a right module of $\bZ_{\ell} [K \backslash G(F) / K]$. 
If we restate the theorem using the \emph{left} action of the Hecke algebra, $H_{G,\mu^{-1}}$ appears instead. This is a more common formulation of the Eichler-Shimura relation in the context of Shimura varieties.  
\end{rem}

Given excursion operators constructed by Fargues-Scholze \cite{FS}, we can use Lafforgue's argument \cite{Lafforgue:main} to prove Theorem \ref{main} at least for characteristic 0 coefficients. However, we get an extra denominator $d!$ from his argument in general. 
Instead, we use the interpretation of excursion operators as the spectral action. 
For simplicity, assume $\ell$ is \emph{very good} for $\widehat{G}$ in their sense \cite{FS}*{VIII} for the moment; any $\ell$ is allowed for general linear groups, and $\ell\neq 2$ are allowed for classical groups. Fargues-Scholze constructed the spectral action \cite{FS}*{X} in this situation, working with the whole group $W_E$. 
Roughly speaking, a version of the above relation in fact holds for the vector bundle on the moduli stack of $L$-parameters associated with $r_{\mu}$, which comes with the action of $W_E$, and $R\Gamma_c (\cM_{(G, b, \mu), K}, \bZ_{\ell}[q^{1/2}])$ is obtained by applying the action of this vector bundle to $\cInd^{G(F)}_{K}\bZ_{\ell}[q^{1/2}]$. 
This implies the relation at the level of cohomology. 
We need to modify the argument slightly for a general $\ell$. 
See also Theorem \ref{Hecke} for a similar statement for more general $K$. 
When focusing on a single irreducible representation of $J_b (F)$, we can also prove a similar statement; see Remark \ref{Cayley-Hamilton 2}. 
In fact, we can prove the following result, which does not involve any choice of $\sigma \in W_E$. 

\begin{thm}\label{strong}
Let $(G, b, \mu)$ be a local Shimura datum over $F$. 
Let $\rho$ be an irreducible smooth $L$-representation of $J_b (F)$ for an algebraically closed field $L$ over $\bZ_{\ell}$. Choose $q^{1/2}\in L$. 
Let 
\[
\varphi_{\rho}^{FS}\colon W_F \to \widehat{J}_b (L) \rtimes W_F \hookrightarrow \widehat{G}(L)\rtimes W_F
\]
denote the $L$-parameter of $\rho$ with the twisted embedding \cite{FS}*{IX.4.1, IX.7.1}. 

Suppose either $\ell$ is very good for $\widehat{G}$ and $K$ is pro-$p$, or $\ell$ is invertible in $L$. 
If $\tau$ is an irreducible constituent of 
\[
\Ext^i_{J_b (F)} (R\Gamma_c (\cM_{(G, b, \mu), K}, \bZ_{\ell}[q^{1/2}]), \rho)(-d/2)   
\]
as a representation of $W_E$, 
then $\tau$ also appears in $r_{\mu^{-1}}\circ \varphi^{FS}_{\rho}$ restricted to $W_E$. 
\end{thm}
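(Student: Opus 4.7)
The plan is to invoke the Fargues-Scholze spectral action to realize the $W_E$-module structure on the given Ext group as a subquotient of the fiber, at $\varphi_\rho^{FS}$, of the tautological vector bundle $\mathcal{E}_{r_{\mu^{-1}}}$ on the moduli stack of $L$-parameters.

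First, following the setup behind Theorem \ref{main}, one realizes $R\Gamma_c (\cM_{(G, b, \mu), K}, \bZ_\ell[q^{1/2}])$ as $i_b^* T_{r_\mu}(i_{1,!} \cInd^{G(F)}_K \bZ_\ell[q^{1/2}])$, up to a shift and Tate twist involving $d/2$; here $i_1, i_b$ are the inclusions of the neutral and the $b$-strata of $\Bun_G$, and $T_{r_\mu}$ is the geometric Hecke operator attached to $r_\mu$. The $W_E$- and $J_b(F)$-actions are the tautological ones, coming from the $W_E$-equivariance of the Satake sheaf and from $\Aut_{\Bun_G}(b)=J_b(F)$. Applying $\Ext_{J_b(F)}(-,\rho)$, together with the adjunction $i_b^* \dashv i_{b,*}$ and the identification of the adjoint of $T_{r_\mu}$ with $T_{r_{\mu^{-1}}}$ up to a twist, reduces the problem to computing
\[
\Ext^*_{D(\Bun_G)} \bigl(i_{1,!} \cInd^{G(F)}_K \bZ_\ell[q^{1/2}],\ T_{r_{\mu^{-1}}}(i_{b,*}\rho)\bigr),
\]
the $(-d/2)$-twist in the statement being arranged to cancel the Satake half-twist.

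Next, by the spectral action of \cite{FS}*{X}, $T_{r_{\mu^{-1}}}$ acts via $W_E$-equivariant tensoring with $\mathcal{E}_{r_{\mu^{-1}}}$. By the defining property of the Fargues-Scholze parameter, $i_{b,*}\rho$ has spectral support at the $\widehat{G}$-conjugacy class of $\varphi_\rho^{FS}$, and hence the $W_E$-action on the Ext above factors through the fiber $\mathcal{E}_{r_{\mu^{-1}}}|_{\varphi_\rho^{FS}} \cong r_{\mu^{-1}} \circ \varphi_\rho^{FS}|_{W_E}$. Any irreducible $W_E$-constituent $\tau$ must therefore appear in $r_{\mu^{-1}} \circ \varphi_\rho^{FS}|_{W_E}$, as claimed.

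The main obstacle is invoking the spectral action integrally when $\ell$ is not invertible in $L$, which is exactly why the hypothesis splits into two cases. When $\ell$ is invertible in $L$, the spectral action over $L$ applies without any assumption on $\widehat{G}$ and no restriction on $K$ is needed. When $\ell$ is very good for $\widehat{G}$, the integral spectral action of \cite{FS}*{X} is available, and the pro-$p$ hypothesis on $K$ ensures that $\cInd^{G(F)}_K \bZ_\ell[q^{1/2}]$ lies in a well-behaved block of $D(G(F), \bZ_\ell[q^{1/2}])$ so that the spectral decomposition respects it cleanly. A secondary difficulty is the careful bookkeeping of Tate twists, shifts, and the appearance of $\mu^{-1}$ rather than $\mu$, the latter coming from the adjoint of $T_{r_\mu}$ being $T_{r_{\mu^\vee}}$ together with the identification $r_{\mu^\vee} \cong r_{\mu^{-1}}$ for minuscule $\mu$.
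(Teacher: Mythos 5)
Your first half follows the paper's route: realize the Ext groups, via adjunction and the passage from $T_{r_\mu}$ to $T_{r_{\mu^{-1}}}$, as (derived) $K$-invariants of $j^* T_{r_{\mu^{-1}}}(Ri^b_*\rho)$, and then let the spectral action act through the vector bundle $\cV_{r_{\mu^{-1}}}$ attached to $r_{\mu^{-1}}$. The gap is at the decisive step, where you assert that ``spectral support at $\varphi_\rho^{FS}$'' forces the $W_E$-action to factor through the fiber $\cV_{r_{\mu^{-1}}}|_{\varphi_\rho^{FS}} \cong r_{\mu^{-1}}\circ\varphi_\rho^{FS}$. What the Fargues--Scholze construction actually gives is that the excursion algebra $\cO (Z^1 (W_E/P, \widehat{G})_{\Lambda})^{\widehat{G}}$ --- functions on the \emph{coarse} moduli $Z^1 \sslash \widehat{G}$ --- acts on $Ri^b_*\rho$ through the character of the point $\varphi_\rho^{FS}$. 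Hence the $W_E$-action on $T_{r_{\mu^{-1}}}Ri^b_*\rho$ only factors through $\End_L(\rho)\otimes_{\cO (Z^1)^{\widehat{G}}}\pi_0\End(\cV_{r_{\mu^{-1}}})$, i.e.\ through the restriction of the bundle to the \emph{whole fiber} of the stack $Z^1/\widehat{G}$ over that coarse point. This fiber contains all (generally non-semisimple) parameters $\varphi$ whose semisimplification is $\varphi_\rho^{FS}$, and the restriction of $\cV_{r_{\mu^{-1}}}$ there is not the single $W_E$-representation $r_{\mu^{-1}}\circ\varphi_\rho^{FS}$; your step conflates a point of the coarse space with a point of the stack. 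This is precisely where the paper does its real work: it notes that $r_\mu\circ\varphi$ has constant semisimplification as $\varphi$ runs over this fiber, and then argues by contradiction --- if an extra irreducible constituent $\tau$ occurred, Jacobson density/Burnside produces $e_\tau\in L[W_E]$ acting as the identity on $\tau$ and as $0$ on every constituent of $r_\mu\circ\varphi_\rho^{FS}$; such $e_\tau$ is pointwise nilpotent on the bundle over the fiber, so a power of it dies in $\End_L(\rho)\otimes_{\cO (Z^1)^{\widehat{G}}}\pi_0\End(\cV_{r_\mu})$, contradicting that it acts invertibly on the $\tau$-isotypic part of the cohomology. Without this (or an equivalent) argument the proof does not close.

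A secondary point: the pro-$p$ hypothesis on $K$ is not about block decompositions. It is what makes $\cInd^{G(F)}_K\bZ_\ell[q^{1/2}]$ compact/projective (equivalently, $(-)^K$ exact) with integral coefficients, so that the identification of the Ext groups with $K$-invariants of $j^* T_{r_{\mu^{-1}}}(Ri^b_*\rho)$ is valid at the derived level; when $\ell$ is invertible in $L$ one works with characteristic-zero coefficients throughout, where both this step and the spectral action (without the very good hypothesis) are available, as you say.
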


Note that Schur's lemma holds for $\rho$ and $\rho$ is admissible \cite{Vigneras:book}*{II.2.8}. 
In particular, $\varphi^{FS}_{\rho}$ above can be defined. Moreover, $R\Gamma_c (\cM_{(G, b, \mu), K}, \bZ_{\ell}[q^{1/2}])$ is a complex of smooth representations of $J_b (F)$, with a continuous action of $W_E$, by \cite{FS}*{IX.3.1} (or its proof) and the Ext modules are taken in the derived category of smooth representations and they are finite dimensional again by \cite{FS}*{IX.3.1}.  

\begin{rem}
Theorem \ref{strong} is compatible with the known description of the cohomology of Lubin-Tate and Drinfeld towers \cites{Boyer, Dat, Dat:integral}, and the Kottwitz conjecture for the discrete part in general. 
Let us also point out that the Kottwitz conjecture is often stated with an incorrect sign, and we believe our sign is correct. 

A variant of Theorem \ref{strong} holds for a non-minuscule $\mu$; cf.~\cite{FS}*{IX.3.2}. Note that it does not contradict \cite{Imai}*{8.7}. In his example, $G=\GL_2$ and $\mu =(2, 0)\in \bZ^2$, and he computed that $r_{(2,0)}\circ \varphi_{\rho}$ \emph{and} $r_{(1,1)}\circ \varphi_{\rho}$ appear (up to sign) in a non-cuspidal case.
However, $r_{(1,1)}\circ \varphi_{\rho}$ is a direct summand of $r_{(2,0)}\circ \varphi_{\rho}$ in that case. 
\end{rem}

\subsection*{Acknowledgements}
I thank Naoki Imai and Yoichi Mieda for related conversations. 
This work was supported by JSPS KAKENHI Grant Number 20K14284.

\section{Spectral action}\label{spectral action}
We recall the spectral action with integral coefficients in a general setting from \cite{FS}*{X.3}. 

Let $H$ be a split reductive group over a discrete valuation ring $\Lambda$ with an action of a finite group $Q$. 
For a finite set $I$, let $\Rep_{\Lambda}((H \rtimes Q)^I)$ denote the category of representations of $(H \rtimes Q)^I$ on finite projective $\Lambda$-modules. 
Let $W=\bZ\to Q$ be a map of groups.  
Let $Z^1 (W, H)$ denote the moduli scheme of 1-cocycles $W\to H$, equipped with the action of $H$. 
(Roughly speaking, we are considering $L$-parameters $W\to H\rtimes Q$ here.)

Suppose that we are given a $\Lambda$-linear idempotent-complete small stable $\infty$-category $\cC$, and, for finite sets $I$, 
exact $\Rep_\Lambda (Q^I)$-linear monoidal functors 
\[
\Rep_{\Lambda}((H \rtimes Q)^I) \to \End_{\Lambda} (\cC)^{BW^I}; \quad V \mapsto T_V  
\]
in a functorial way. 
It follows from \cite{FS}*{X.3.1, X.3.3} that this datum is equivalent to giving the action of 
\[
\Perf^{\Sigma} (Z^1 (W, H)/H), 
\]
the idempotent-complete stable $\infty$-subcategory of $\Perf (Z^1 (W, H)/H)$ generated by the image of $\Rep_{\Lambda} (H)$, and moreover, $T_V$ is recovered from this action via functors
\[
\Rep_{\Lambda} ((H\rtimes Q)^I) \to 
\Perf^{\Sigma} (Z^1 (W, H)/H)^{B W^I}. 
\]

Take an object $V$ of $\Rep_{\Lambda} (H\rtimes Q)$, and let $\cV$ denote the associated vector bundle on $Z^1 (W, H)/H$, with an action of $W$. We explain the Eichler-Shimura relation in this setting. 

The relation involves excursion operators \cite{FS}*{VIII}. 
Let $\gamma$ be a generator of $W$. 
Given an integer $i\geq 0$, let $S_{\wedge^i V}$ denote the following composite
\[
T_1 \to T_{\wedge^{i} V \otimes (\wedge^i V)^{\vee}}\xrightarrow{(\gamma,0)} T_{\wedge^{i} V \otimes (\wedge^i V)^{\vee}} \to T_1=\id, 
\]
which also induces an element of $\End (T_V)$. 
Here, we use the unit map $1 \to \wedge^{i} V \otimes (\wedge^i V)^{\vee}$ and the counit map $\wedge^{i} V \otimes (\wedge^i V)^{\vee}\to 1$ to define the left and right arrows, and the middle map is induced by $(\gamma,0) \in W^{\{0,1\}}$.  
It lifts to an element of $\cO (Z^1 (W, H))^H$, still denoted by $S_{\wedge^i V}$; at the level of points $W \to H$, compose the given 1-cocycle with $\wedge^i V$ and take the trace of $\gamma$. 

\begin{prop}\label{Eichler-Shimura}
If $V$ has constant rank $r$, then the following relation holds in $\pi_0\End (\cV)$:
\[
\sum_{i=0}^r (-1)^i S_{\wedge^{r-i} V} \circ \gamma^i =0.  
\]
In particular, for any object $A$ of $\cC$, 
\[
\sum_{i=0}^r (-1)^i S_{\wedge^{r-i} V} \circ \gamma^i =0. 
\]
holds in $\pi_0\End (T_V (A))$. 
\end{prop}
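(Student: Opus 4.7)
The strategy is to interpret the asserted identity as the Cayley--Hamilton theorem for the endomorphism $\gamma$ of the rank-$r$ vector bundle $\cV$ on the stack $Z^1(W,H)/H$.

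First, I would identify $S_{\wedge^i V}$ as a trace function. By the pointwise description already recalled in the excerpt, at a 1-cocycle $\varphi : W\to H$ the composite
\[
1 \to \wedge^i V \otimes (\wedge^i V)^{\vee} \xrightarrow{(\gamma, 0)} \wedge^i V \otimes (\wedge^i V)^{\vee} \to 1
\]
equals $\mathrm{tr}\bigl(\wedge^i V(\varphi(\gamma))\bigr)$, since $(\gamma,0)$ acts by $\varphi(\gamma)$ on the first tensor factor and trivially on the second, and unit--endomorphism--counit computes the trace. Globally, this says that $S_{\wedge^i V}\in \cO(Z^1(W,H))^H$ is precisely the trace of the tautological $\gamma$-action on the exterior power $\wedge^i\cV$.

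Next, I would apply Cayley--Hamilton. Since $W=\bZ$, the scheme $Z^1(W,H)$ is affine (isomorphic to $H$ via $\varphi \mapsto \varphi(\gamma)$), so $\cV$ is a finitely generated projective module of rank $r$ over $\cO(Z^1(W,H))$ equipped with an endomorphism $\gamma$. The classical Cayley--Hamilton theorem then yields
\[
\sum_{i=0}^r (-1)^i \,\mathrm{tr}(\wedge^{r-i}\gamma)\, \gamma^i = 0
\]
in $\End(\cV)$. All operations being $H$-equivariant, the identity descends to the quotient stack $Z^1(W,H)/H$, and, combined with the identification of the previous paragraph, this is the desired relation in $\pi_0\End(\cV)$.

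Finally, for the ``in particular'' clause, by the equivalence \cite{FS}*{X.3.1, X.3.3} the functor $T_V$ is recovered by evaluating the $\Perf^{\Sigma}(Z^1(W,H)/H)$-action on $\cC$ at $\cV$; hence any morphism in $\pi_0\End(\cV)$ maps to an element of $\pi_0\End(T_V(A))$ for every $A\in\cC$, compatibly with composition and $\Lambda$-linear combinations, so the relation transports. The only substantive issue I foresee is confirming that the abstractly-defined excursion operator $S_{\wedge^i V}$ matches the geometric trace function on $\wedge^i\cV$; but this matching is already built into the formalism of the spectral action in \cite{FS}*{VIII, X}, and once it is granted, the remainder of the argument is Cayley--Hamilton applied to a vector bundle on an affine scheme.
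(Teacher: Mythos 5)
Your proposal is correct and is essentially the paper's own argument: the paper proves the first identity by the Cayley--Hamilton theorem (your trace identification of $S_{\wedge^i V}$ and the affineness of $Z^1(\bZ,H)\cong H$ being exactly the implicit details) and deduces the second via the spectral action of $\Perf^{\Sigma}(Z^1(W,H)/H)$ on $\cC$, just as you do. The only cosmetic caveat is that $\gamma$ acts on $\cV$ at a point $\varphi$ through $\varphi(\gamma)\rtimes\bar\gamma$ in $H\rtimes Q$ rather than through $\varphi(\gamma)$ alone, but since you apply Cayley--Hamilton to that same tautological endomorphism throughout, the argument is unaffected.
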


\begin{proof}
The first part simply follows from the Cayley-Hamilton theorem. 
The second part follows from the first by using the spectral action. 
\end{proof}

A slightly weaker statement can be proved in a more elementary manner by arguing as in \cite{Lafforgue:main}*{7.1}. 

\section{Cohomology of local Shimura varieties}
Let $G$ be a reductive group over $F$ and let $\Lambda$ be a $\bZ_{\ell}[q^{1/2}]$-algebra. 
Take a finite extension $E$ of $F$ and a finite Galois extension $E'$ of $E$ that splits $G$. 
Set $Q=\Gal (E'/E)$. 
The work of Fargues-Scholze provides \cite{FS}*{IX.2} monoidal functors
\[
\Rep_{\Lambda} ((\widehat{G}\rtimes Q)^I) \to \End_{\Lambda} (\cD_{\lis} (\Bun_G, \Lambda))^{BW_E^I}; \quad V \mapsto T_V
\]
functorial in finite sets $I$, where $\cD_{\textnormal{lis}} (\Bun_G, \Lambda)$ is a condensed $\infty$-category defined there. (To be precise, this version is not stated explicitly; $Q$ there is a quotient of $W_F$, not $W_E$, in our notation. See also the discussion in \cite{FS}*{IX.3}.)
In particular, fixing an element $\gamma\in W_E$, we obtain
\[
\Rep_{\Lambda} ((\widehat{G}\rtimes Q)^I) \to \End_{\Lambda} (\cD_{\textnormal{lis}} (\Bun_G, \Lambda))^{B(\gamma^{\bZ})^I}
\]
to which we can apply Proposition \ref{Eichler-Shimura}. 

On the other hand, any representation $\widehat{G}\rtimes W_E \to \GL (V)$ and a choice of an element $\gamma \in W_E$ give rise to an element of $\cO (Z^1 (\gamma^{\bZ}, \widehat{G})_{\Lambda})^{\widehat{G}}$, ``excursion operators'', by taking the trace as before. 
Fargues-Scholze also constructed a map
\[
\cO (Z^1 (\gamma^{\bZ}, \widehat{G})_{\Lambda})^{\widehat{G}} \to \cZ (G(F), \Lambda)
\]
to the Bernstein center of smooth $\Lambda$-representations of $G(F)$. 
Composing it with the natural projection
\[
\cZ (G(F), \Lambda) \to \cZ (\End_{G(F)}(\cInd^{G(F)}_K \Lambda))=\cZ (\Lambda [K \backslash G(F) / K]), 
\]
we see that the pair $V, \gamma$ gives an element of $\cZ (\Lambda [K \backslash G(F) / K])$. 
Moreover, this is precisely the excursion operator $S_{V, \gamma}$ acting on $\cInd^{G(F)}_K \Lambda$:
\[
\cInd^{G(F)}_K \Lambda \to T_{V\otimes V^{\vee}} (\cInd^{G(F)}_K \Lambda) \xrightarrow{(\gamma, e)} T_{V\otimes V^{\vee}} (\cInd^{G(F)}_K \Lambda) \to \cInd^{G(F)}_K \Lambda. 
\]
See \cite{FS}*{IX.5}; we directly use excursion operators and do not need the spectral Bernstein center, so the additional assumption there can be ignored. 

\begin{thm}\label{Hecke}
Let $(G, b, \mu)$ be a local Shimura datum over $F$ with the reflex field $E$. 
For an open compact subgroup $K\subset G(F)$ and any element of $\gamma \in W_E$, the action of $\gamma$ on 
\[
R\Gamma_{c} (\cM_{(G, b, \mu), K}, \bZ_{\ell}[q^{1/2}])(d/2)
\]
satisfies the relation 
\[
\sum_{i=0}^d (-1)^i S_{\wedge^{d-i} r_{\mu}, \gamma} \circ \gamma^i =0, 
\]
where $d=\dim r_{\mu}$. 
\end{thm}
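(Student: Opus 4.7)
The plan is to apply Proposition \ref{Eichler-Shimura} to $\cC = \cD_{\lis}(\Bun_G, \bZ_\ell[q^{1/2}])$, with $V = r_\mu$ viewed as a representation of $\widehat{G}\rtimes Q$, and with $W = \gamma^{\bZ}\subset W_E$. Since this is the general (not necessarily ``very good'') situation recalled at the end of Section \ref{spectral action}, Proposition \ref{Eichler-Shimura} applies directly once we exhibit $R\Gamma_c(\cM_{(G, b, \mu), K}, \bZ_\ell[q^{1/2}])(d/2)$ as $T_{r_\mu}(A)$ for a suitable object $A$ of $\cC$.

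The first step is to write down this object. Following \cite{FS}*{IX.3}, take $A := i_{1,!} \cInd^{G(F)}_{K} \bZ_\ell[q^{1/2}]$, where $i_1\colon [*/G(F)] \hookrightarrow \Bun_G$ is the inclusion of the trivial stratum. For minuscule $\mu$, the Hecke operator $T_{r_\mu}$ is, up to shift and twist, a pushforward along the map from a local Hecke stack parametrising modifications of type $\mu$. Its $i_b^*$-stalk computes the compactly supported cohomology of $\cM_{(G, b, \mu), K}$, and the normalization conventions used in the definition of the spectral action $V\mapsto T_V$ in \cite{FS}*{IX.2} absorb the shift and half-Tate twist into a canonical $W_E\times J_b(F)\times G(F)$-equivariant identification
\[
i_b^{*} T_{r_\mu}(A) \simeq R\Gamma_c(\cM_{(G, b, \mu), K}, \bZ_\ell[q^{1/2}])(d/2).
\]
Here the $W_E$-action on the left agrees with the one in the statement, and the right $\bZ_\ell[q^{1/2}][K\backslash G(F)/K]$-action is inherited from the $G(F)$-action on $\cInd^{G(F)}_K \bZ_\ell[q^{1/2}]$.

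Granted this identification, Proposition \ref{Eichler-Shimura} applied to the rank-$d$ representation $V = r_\mu$ gives
\[
\sum_{i=0}^{d} (-1)^{i} S_{\wedge^{d-i} r_\mu} \circ \gamma^{i} = 0 \quad\text{in } \pi_0\End(T_{r_\mu}(A)),
\]
and applying $i_b^{*}$ yields the same relation on $R\Gamma_c(\cM_{(G, b, \mu), K}, \bZ_\ell[q^{1/2}])(d/2)$. It remains to identify the spectral-action excursion operators $S_{\wedge^{d-i} r_\mu}$ appearing here with the Hecke operators $S_{\wedge^{d-i} r_\mu, \gamma}$ of the statement. This is precisely the content of the paragraph preceding Theorem \ref{Hecke}: both are constructed from the same excursion diagram, and their action on $\cInd^{G(F)}_K \bZ_\ell[q^{1/2}]$ factors through the Bernstein-center map $\cO(Z^{1}(\gamma^{\bZ}, \widehat{G}))^{\widehat{G}} \to \cZ(G(F), \bZ_\ell[q^{1/2}]) \to \cZ(\bZ_\ell[q^{1/2}][K\backslash G(F)/K])$ by \cite{FS}*{IX.5}.

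The main obstacle is the identification $i_b^{*} T_{r_\mu}(A) \simeq R\Gamma_c(\cM_{(G, b, \mu), K}, \bZ_\ell[q^{1/2}])(d/2)$: one must check that every structure (the $W_E$- and $J_b(F)$-actions, the right Hecke-algebra action through $G(F)$, and most importantly the half-Tate twist and cohomological shift) matches the conventions built into the definition of $T_V$ that make Proposition \ref{Eichler-Shimura} hold. Once this bookkeeping is done carefully using \cite{FS}*{IX.2, IX.3}, the rest of the argument is a formal consequence of the spectral action and Proposition \ref{Eichler-Shimura}.
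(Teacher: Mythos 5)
Your proposal is correct and follows essentially the same route as the paper: both exhibit $R\Gamma_c(\cM_{(G,b,\mu),K},\bZ_\ell[q^{1/2}])(d/2)$ (up to a shift, which is harmless for an endomorphism identity) as $i^{b*}T_{r_\mu}(j_!\cInd^{G(F)}_K\bZ_\ell[q^{1/2}])$ via the proof of \cite{FS}*{IX.3.1}, apply Proposition \ref{Eichler-Shimura} to $T_{r_\mu}(j_!\cInd^{G(F)}_K\bZ_\ell[q^{1/2}])$, and then use the fact that excursion operators commute with $T_{r_\mu}$ together with \cite{FS}*{IX.5} to identify them with the central Hecke operators $S_{\wedge^{d-i}r_\mu,\gamma}$. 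The only cosmetic difference is that you fold the shift $[d]$ into the normalization rather than carrying it explicitly as the paper does.
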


Note that the representation $r_{\mu}$ can be defined with integral coefficients in a unique way. 

\begin{proof}
As in the proof of \cite{FS}*{IX.3.1}, there is an identification
\[
R\Gamma_c (\cM_{(G, b, \mu), K}, \bZ_{\ell}[q^{1/2}]) [d](d/2) \cong
i^{b*} T_{r_{\mu}} (j_! \cInd^{G(F)}_{K}\bZ_{\ell}[q^{1/2}])
\]
in $D_{\textnormal{lis}}(\Bun^b_G, \bZ_{\ell}[q^{1/2}])\cong D ({J_b (F)}, \bZ_{\ell}[q^{1/2}])$ compatibly with actions of $W_E$, 
where $j\colon \Bun^1_G \to \Bun_G, i^b \colon \Bun^b_G \to \Bun_G$ are immersions. 
(The shift and twist come from the normalization of perverse sheaves that is used to define $T_{r_{\mu}}$.)
This identification is also compatible with actions of the Hecke algebra if we consider the \emph{right} action on $i^{b*} T_{r_{\mu}} (j_! \cInd^{G(F)}_{K}\bZ_{\ell}[q^{1/2}])$. 

Therefore, it suffices to study the action of $\gamma$ on $T_{r_{\mu}} (j_! \cInd^{G(F)}_{K}\bZ_{\ell}[q^{1/2}])$. By Proposition \ref{Eichler-Shimura}, 
\[
\sum_{i=0}^d (-1)^i S_{\wedge^{d-i}r_{\mu}, \gamma}\circ \gamma^i =0
\]
holds in $\pi_0\End (T_{r_{\mu}} (j_! \cInd^{G(F)}_{K}\bZ_{\ell}[q^{1/2}]))$. 
As excursion operators commute with the Hecke operator $T_{r_{\mu}}$, the action of $S_{\wedge^{d-i} r_{\mu}, \gamma}$ on $T_{r_{\mu}} (j_! \cInd^{G(F)}_K\bZ_{\ell}[q^{1/2}])$ comes from that on $j_! \cInd^{G(F)}_{K}\bZ_{\ell}[q^{1/2}]$. 
This completes the proof.
\end{proof}

\begin{proof}[Proof of Theorem \ref{main}]
Given Theorem \ref{Hecke}, it suffices to determine the image of $S_{\wedge^{d-i} r_{\mu}, ?}$ for $?=\sigma, \gamma$. 
For this purpose, we are allowed to invert $\ell$, and work with the Hecke algebra $\overline{\bQ}_{\ell} [\cG(\cO_F)\backslash G(F) / \cG(\cO_F)]$. 

Regarding the spherical Hecke algebra as the ring of regular functions on irreducible unramified $\overline{\bQ}_{\ell}$-representations $\pi$, we need only compute the value of $S_{\wedge^{d-i} r_{\mu}, ?}$ at each $\pi$. 

Now, we can use $L$-parameters $\varphi_{\pi}^{FS}$ of Fargues-Scholze \cite{FS}*{IX.4.1}. 
First note that, as $\pi$ is unramified, $\varphi_{\pi}^{FS}$ is the same as the usual $L$-parameter $\varphi_{\pi}$ \cite{FS}*{IX.6.4, IX.7.3}.  
By the definition of $\varphi_{\pi}^{FS}$, $S_{\wedge^{d-i} r_{\mu}, \sigma}(\pi)$ is equal to the trace of 
$(\wedge^{d-i} r_{\mu}\circ \varphi_{\pi}^{FS})(\sigma)$. 
This is the coefficient of $H_{G,\mu}$ after taking into account of the Tate twist.   
Similarly, we can compute $S_{\wedge^{d-i} r_{\mu}, \gamma}(\pi)$: as $\varphi_{\pi}^{FS}$ is unramified, $\gamma$ acts trivially on $\wedge^{d-i} r_{\mu}\circ \varphi_{\pi}^{FS}$. 
Therefore, $S_{\wedge^{d-i} r_{\mu}, \gamma}(\pi)=\dim \wedge^{d-i} r_{\mu}$. 

For the claim about the action of $P_E$, use that the action factors through a finite quotient of $P_E$ \cite{FS}*{IX.5.1}. 
\end{proof}

\section{Complements on mod $\ell$ representations}
We make some remarks on $\overline{\bF}_{\ell}$-representations related to Theorem \ref{main}. 
We choose $q^{1/2}\in \overline{\bF}_{\ell}$; cf.~\cite{Treumann-Venkatesh}*{7.4}. 
Vign\'eras attached to an irreducible smooth representation $\pi$ with nonzero Iwahori fixed vector a character of the Hecke algebra $\overline{\bF}_{\ell}[\cG(\cO_F)\backslash G(F)/ \cG(\cO_F)]$ by looking at the action on Iwahori fixed vectors \cite{Vigneras}*{B.5} (this is unconditional as \cite{Vigneras}*{(B.2.5)} is known \cite{Dat:finitude}*{proof of 6.3}), equivalently, via a less direct way through \cite{Vigneras}*{B.3, B.4}. 
Such a character corresponds to a semisimple (twisted) $\widehat{G}(\overline{\bF}_{\ell})$-conjugacy class of $\widehat{G}(\overline{\bF}_{\ell})\rtimes \Frob_{F}$ \cite{Treumann-Venkatesh}*{(7.4.5)}. Namely, any irreducible smooth representation $\pi$ with nonzero Iwahori fixed vector gives rise to an unramified $L$-parameter
\[
\varphi_{\pi}^{TV}\colon W_F \to \widehat{G}(\overline{\bF}_{\ell})\rtimes W_F. 
\]
(Different unramified representations may give rise to the same $L$-parameter \cite{Vigneras}*{B.4}.)

On the other hand, one of the main results of Fargues-Scholze \cite{FS}*{IX.4.1} also gives an $L$-parameter
\[
\varphi_{\pi}^{FS}\colon W_F \to \widehat{G}(\overline{\bF}_{\ell})\rtimes W_F. 
\]

Two parameters $\varphi_\pi^{TV}$, $\varphi_{\pi}^{FS}$ are the same : both construction commute with parabolic induction in a suitable sense \cite{Treumann-Venkatesh}*{7.5}, \cite{FS}*{IX.7.3}, and it reduces to the case of unramified tori. This case can be settled by \cite{Treumann-Venkatesh}*{2.9}, \cite{FS}*{IX.6.4}. 

The reduction mod $\ell$ of $H_{G,\mu}$ can be also regarded as a function
\[
\pi \mapsto \det (X- q^{[E:F] d/2}r_{\mu}\circ \varphi_{\pi}^{TV}(\Frob_E))=\det (X- q^{[E:F] d/2}r_{\mu}\circ \varphi_{\pi}^{FS} (\Frob_E))
\]
on irreducible unramified $\overline{\bF}_{\ell}$-representations. 

\section{Proof of Theorem \ref{strong}}
As in the statement, let $(G, b, \mu)$ be a local Shimura datum over $F$, and take an open compact subgroup $K\subset G(F)$. 
Let $\rho$ be an irreducible smooth $L$-representation of $J_b (F)$ with an algebraically closed $L$ over $\bZ_{\ell}$. 
Choose $q^{1/2}\in L$. 

As in \cite{FS}*{p.315}, we see that
\[
R\Hom_{J_b (F)} (R\Gamma_c (\cM_{(G, b, \mu), K}, \bZ_{\ell}[q^{1/2}]), \rho) (-d/2)
\]
can be obtained from
\[
j^* T_{r_{\mu^{-1}}} Ri^b_* \rho
\]
by taking $K$-invariants. 
So, after replacing $\mu$, it suffices to show that  
$j^* T_{r_{\mu}}Ri^b_* \rho$ satisfies the corresponding statement. 
(As it is a complex of admissible representations of $G(\bQ_p)$, only finite dimensional $W_E$-representations appear.)

Now we shall use the spectral action, working with the whole group $W_E$. (Compare with Section \ref{spectral action}.)
Set $\Lambda=\bZ_{\ell}[q^{1/2}]$. 
Let $Z^1 (W_E, \widehat{G})_{\Lambda}$ denote the scheme parametrizing $L$-parameters \cite{FS}*{VIII}, with $\widehat{G}$-action and the quotient map
\[
Z^1 (W_E, \widehat{G}) \to Z^1 (W_E, \widehat{G}) \sslash G
\]
to the coarse moduli. 
By construction, $\varphi_{\rho}^{FS}$, well-defined up to $\widehat{G}(L)$-conjugacy, is an $L$-valued point of $Z^1 (W_E, \widehat{G}) \sslash G$. 

Assuming $\ell$ is very good for $\widehat{G}$, Fargues-Scholze \cite{FS}*{X.0.1} prove that the excursion operators
\[
\Rep_{\Lambda} ((\widehat{G}\rtimes Q)^I) \to \End_{\Lambda} (\cD_{\lis} (\Bun_G, \Lambda))^{BW_E^I} 
\]
produce the action of 
\[
\Perf (Z^1 (W_E, \widehat{G})_{\Lambda} / \widehat{G})
\]
on $\cD_{\lis} (\Bun_G, \Lambda)$, and the excursion operators are recovered by composing functors 
\[
\Rep_{\Lambda} ((\widehat{G}\rtimes Q)^I) \to \Perf (Z^1 (W_E, \widehat{G})_{\Lambda} / \widehat{G})^{BW_E^I}
\]
with this action. 
Also, the actions on $i^b_! \rho, i^b_! \rho^{\vee}$ factor through 
\[
\Perf (Z^1 (W_E /P, \widehat{G})_{\Lambda} / \widehat{G})
\]
for some open compact subgroup $P\subset W_E$ of the wild inertia subgroup \cite{FS}*{IX.5.1}, and the same holds true for $Ri^b_* \rho\cong \mathbb{D} (i^b_! \rho^{\vee})$ by \cite{FS}*{IX.2.2}.  
In particular, if we write $\cV_{r_{\mu}}$ for the vector bundle on $Z^1 (W_E /P, \widehat{G})_{\Lambda} / \widehat{G}$ corresponding to $r_{\mu}$, $T_{r_{\mu}} Ri^b_* \rho$ is obtained by applying the action of $\cV_{r_{\mu}}$ to $Ri^b_* \rho$. 
We will study $W_E \to \End (\cV_{r_\mu})$. 
Using canonical isomorphisms
\[
\End (Ri^b_* \rho)\cong \End (i^b_! \rho)\cong \End_L (\rho) \cong L
\]
and compatibility of excursion operators and the spectral action, 
we have the following commutative diagram:
\[
\begin{CD}
\cO (Z^1 (W_E/ P, \widehat{G})_{\Lambda})^{\widehat{G}} @>>> \pi_{0}\End (\cV_{r_{\mu}}) \\
@VVV @VVV \\
\End_L (\rho) @>>> \pi_{0}\End (T_{r_{\mu}} Ri^b_* \rho), 
\end{CD}
\]
where $\cO (Z^1 (W_E/ P, \widehat{G})_{\Lambda})^{\widehat{G}}$ is the ring of regular functions on the affine scheme $Z^1 (W_E/ P, \widehat{G})_{\Lambda} \sslash \widehat{G}$, and the left arrow corresponds to $\varphi^{FS}_{\rho}$.

If $\varphi\colon W_E \to \widehat{G}(L)\rtimes W_E$ is an $L$-parameter and it maps to $\varphi_{\rho}^{FS}$ under the quotient map, let us say that the semisimplification of $\varphi$ is $\varphi_{\rho}^{FS}$. 
The semisimplification of the representation $r_{\mu}\circ \varphi_{\rho}$ is constant for such $\varphi$. 

Take an irreducible constituent $\tau$ of $H^i (j^* T_{r_{\mu}}i^b_! \rho)$ for some $i$ regarded as a 
representation of $W_E$, and assume that $\tau$ does not appear in $r_{\mu}\circ \varphi^{FS}_{\rho}$. 
(As remarked before, $\tau$ is finite dimensional.)
Let $\tau_1, \dots, \tau_n$ denote the irreducible constituents of $r_{\mu}\circ \varphi^{FS}_{\rho}$.  
Then, the action map 
\[
L[W_E]\to \End_L (\tau) \times \prod_i \End_L (\tau_i)
\]
is surjective. (This is a version of Burnside's theorem, and follows from Jacobson's density and Schur's lemma as usual.)
Take an element $e_{\tau}\in L [W_E]$ that maps to $(\id_{\tau}, 0, \dots, 0)$. 
Then, the action of $e_{\tau}$ on $ r_{\mu}\circ \varphi$ is nilpotent for any $L$-parameter $\varphi$ with the semisimplification $\varphi^{FS}_{\rho}$. In particular, the action of $e_{\tau}^d$ is 0 for $d=\dim r_{\mu}$.  
Therefore the image of $e_{\tau}^d$ in 
\[
\End_L (\rho)\otimes_{\cO (Z^1 (W_E/ P, \widehat{G})_{\Lambda})^{\widehat{G}}} \pi_{0}\End (\cV_{r_{\mu}})
\]
is $0$. 
On the other hand, $e_{\tau}^d$ acts non-trivially on $j^* T_{r_{\mu}} Ri^b_* \rho$, thus these contradict. 
We completed the proof of Theorem \ref{strong}. 

\begin{rem}\label{Cayley-Hamilton 2}
Using the Cayley-Hamilton theorem applied to $\cV_{r_\mu}$ as before, we can prove that the action of $\gamma\in W_E$ on
\[
R\Hom_{J_b (F)} (R\Gamma_c (\cM_{(G, b, \mu), K}, \bZ_{\ell}[q^{1/2}]), \rho) (-d/2)
\]
is killed by the characteristic polynomial of $(r_{\mu^{-1}}\circ \varphi_{\rho})(\gamma)$. 
This is closer to the original Eichler-Shimura relation. 
While it is clearly compatible with Theorem \ref{strong}, it does not seem to imply Theorem \ref{strong}.  
\end{rem}

Let us record some variants. 
Let $\pi$ be an irreducible smooth $L$-representation of $G(F)$. 
The same argument applied to $i^{b*} T_{r_{\mu}}j_* \pi$ and the duality isomorphsim at the infinite level imply:

\begin{cor}\label{switch}
Let $\pi$ be an irreducible smooth $L$-representation of $G(F)$. 
If $\tau$ is an irreducible constituent of
\[
\Ext^i_{G(F)} (\varinjlim_K R\Gamma_c (\cM_{(G, b, \mu), K}, \bZ_{\ell}[q^{1/2}]), \pi)(-d/2), 
\]
then $\tau$ appears in $r_{\mu}\circ \varphi^{FS}_{\pi}$. 
\end{cor}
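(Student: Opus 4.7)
The plan is to mimic the proof of Theorem \ref{strong} after swapping the roles of the two strata $j\colon \Bun_G^1 \to \Bun_G$ and $i^b\colon \Bun_G^b \to \Bun_G$. First, invoking the duality isomorphism at the infinite level between the $(G,b,\mu)$- and $(J_b,b^{-1},\mu^{-1})$-towers, I would identify, up to the shift and Tate twist appearing in the proof of Theorem \ref{Hecke},
\[
R\Hom_{G(F)}\bigl(\varinjlim_K R\Gamma_c(\cM_{(G,b,\mu),K}, \bZ_\ell[q^{1/2}]), \pi\bigr)(-d/2)
\]
with the cohomology, in $D_{\lis}(\Bun_G^b, \bZ_\ell[q^{1/2}]) \cong D(J_b(F), \bZ_\ell[q^{1/2}])$, of
\[
i^{b*} T_{r_\mu} Rj_* \pi.
\]
This mirrors the passage, at the start of the proof of Theorem \ref{strong}, from $R\Hom_{J_b(F)}(R\Gamma_c, \rho)(-d/2)$ to the $K$-invariants of $j^* T_{r_{\mu^{-1}}} Ri^b_* \rho$. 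It then suffices to bound the irreducible $W_E$-constituents appearing in $i^{b*} T_{r_\mu} Rj_* \pi$.

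Next I would transplant the spectral action setup. Writing $Rj_* \pi \cong \mathbb{D}(j_! \pi^\vee)$ and arguing as for $Ri^b_* \rho$ in the proof of Theorem \ref{strong}, the excursion operators on $Rj_* \pi$ factor through $\Perf(Z^1(W_E/P, \widehat{G})_\Lambda / \widehat{G})$ for some open subgroup $P$ of the wild inertia, so that $T_{r_\mu} Rj_* \pi$ is obtained by applying the vector bundle $\cV_{r_\mu}$. Schur's lemma for $\pi$ gives $\End(Rj_* \pi) \cong \End_L(\pi) = L$, and compatibility of excursion operators with the spectral action yields a commutative square
\[
\begin{CD}
\cO(Z^1(W_E/P, \widehat{G})_\Lambda)^{\widehat{G}} @>>> \pi_0 \End(\cV_{r_\mu}) \\
@VVV @VVV \\
\End_L(\pi) @>>> \pi_0 \End(T_{r_\mu} Rj_* \pi),
\end{CD}
\]
whose left vertical is evaluation at the $L$-point $\varphi_\pi^{FS}$ of $Z^1(W_E/P, \widehat{G})_\Lambda \sslash \widehat{G}$.

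With these ingredients in place, the Burnside--Jacobson density argument from the proof of Theorem \ref{strong} applies verbatim: given an irreducible $W_E$-constituent $\tau$ of some $H^i(i^{b*} T_{r_\mu} Rj_* \pi)$ not occurring in $r_\mu \circ \varphi_\pi^{FS}$, pick $e_\tau \in L[W_E]$ mapping to $(\id_\tau, 0, \dots, 0)$ in $\End_L(\tau) \times \prod_i \End_L(\tau_i)$, where the $\tau_i$ are the constituents of $r_\mu \circ \varphi_\pi^{FS}$; then $e_\tau^d$ acts as $0$ on $r_\mu \circ \varphi$ for every $L$-parameter $\varphi$ lifting $\varphi_\pi^{FS}$, its image in $\End_L(\pi) \otimes_{\cO(Z^1(W_E/P, \widehat{G})_\Lambda)^{\widehat{G}}} \pi_0\End(\cV_{r_\mu})$ therefore vanishes, and this contradicts its non-triviality on $i^{b*} T_{r_\mu} Rj_* \pi$.

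The main obstacle I expect is the first step: making the duality identification at the infinite level precise, with the correct shifts, Tate twists, and $(W_E, G(F), J_b(F))$-equivariance, and checking that $Rj_* \pi$ enjoys the required wild-inertia-factoring property so that the integral spectral action argument of Section \ref{spectral action} applies under the hypotheses of Theorem \ref{strong}. Once this structural identification is pinned down, the remainder is a direct transcription of the Theorem \ref{strong} argument.
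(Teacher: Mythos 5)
Your proposal is correct and follows essentially the same route as the paper, which deduces Corollary \ref{switch} by applying the argument of Theorem \ref{strong} to $i^{b*} T_{r_{\mu}} Rj_* \pi$ after using the duality isomorphism at the infinite level to identify $R\Hom_{G(F)}(\varinjlim_K R\Gamma_c (\cM_{(G,b,\mu),K}, \bZ_{\ell}[q^{1/2}]), \pi)(-d/2)$ with that object. The spectral-action and Burnside--Jacobson steps you transcribe are exactly the ones reused in the paper.
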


One can also consider $\pi$ and $\rho$ simultaneously. By the argument in \cite{FS}*{p.315}, 
\[
\Ext^i_{G(F)\times J_b (F)} (\varinjlim_K R\Gamma_c (\cM_{(G, b, \mu), K}, \bZ_{\ell}[q^{1/2}]), \pi \boxtimes \rho)
\]
is finite dimensional. 

\begin{cor}\label{common}
If $\tau$ is an irreducible constituent of
\[
\Ext^i_{G(F)\times J_b (F)} (\varinjlim_K R\Gamma_c (\cM_{(G, b, \mu), K}, \bZ_{\ell}[q^{1/2}]), \pi \boxtimes \rho)(-d/2)
\]
for some $i$, then $\tau$ appears both in $r_{\mu}\circ \varphi^{FS}_{\pi}$ and $r_{\mu^{-1}}\circ \varphi^{FS}_{\rho}$.  
\end{cor}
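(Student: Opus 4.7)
The plan is to reduce Corollary \ref{common} to two independent applications of the spectral-action arguments already developed, namely those of Theorem \ref{strong} and Corollary \ref{switch}. Set $X = \varinjlim_K R\Gamma_c (\cM_{(G, b, \mu), K}, \bZ_{\ell}[q^{1/2}])$. Taking the colimit over $K$ of the identification from the proof of Theorem \ref{Hecke} gives
\[
X[d](d/2) \;\cong\; i^{b*} T_{r_\mu}\bigl( j_! C_c^\infty (G(F), \bZ_\ell[q^{1/2}]) \bigr).
\]
Combining this with the adjunctions $(i^{b*}, Ri^b_*)$ and $(j_!, j^*)$, the Hecke self-duality $T_{r_\mu}^{\vee} \cong T_{r_{\mu^{-1}}}$, and the Verdier-duality identity $Ri^b_* \cong \mathbb{D}\, i^b_! \mathbb{D}$ used in the proof of Theorem \ref{strong}, I expect to obtain two parallel $W_E$-equivariant rewritings:
\[
\Ext^i_{G(F)\times J_b(F)}(X, \pi\boxtimes\rho)(-d/2) \;\cong\; \Ext^i_{G(F)}\bigl(j^* T_{r_{\mu^{-1}}} Ri^b_* \rho,\; \pi\bigr) \;\cong\; \Ext^i_{J_b(F)}\bigl(i^{b*} T_{r_\mu} Rj_* \pi,\; \rho\bigr).
\]

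Given these rewritings, I would apply the argument of Theorem \ref{strong} to the first formula. The conclusion there is that any irreducible $W_E$-constituent of $j^* T_{r_{\mu^{-1}}} Ri^b_* \rho$ appears in $r_{\mu^{-1}}\circ \varphi^{FS}_\rho$, because an element of $L[W_E]$ whose image in $\End_L(r_{\mu^{-1}}\circ\varphi^{FS}_\rho)$ is nilpotent of order $d$ must annihilate the whole object via the spectral action specialised at $\varphi^{FS}_\rho$. Since $\Ext^i_{G(F)}(-, \pi)$ is $W_E$-equivariant (the $W_E$-action being external to $G(F)$), the same constraint descends to the target Ext, yielding that $\tau$ occurs in $r_{\mu^{-1}}\circ \varphi^{FS}_\rho$. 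Applying the same argument to the second formula, with the roles of $(\pi, \rho)$ swapped and the spectral specialisation at $\varphi^{FS}_\pi$, yields that $\tau$ occurs in $r_\mu \circ \varphi^{FS}_\pi$.

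The main obstacle I foresee is the preliminary bookkeeping that produces the two rewritings simultaneously and identifies which of the two commuting $G(F)$-actions on $C_c^\infty(G(F))$ corresponds to the Hecke action appearing in the statement; this is the \emph{duality at the infinite level} already invoked in the paragraph preceding Corollary \ref{switch}. One also needs to confirm that the hypotheses ensuring the spectral action is available (very good $\ell$, or $\ell$ invertible in $L$, together with the pro-$p$ condition implicit through the wild-inertia factorisation of \cite{FS}*{IX.5.1}) carry over to both sides of the two rewritings; this is automatic since both $i^b_!\rho$ and $Rj_*\pi$ are built from smooth admissible representations whose associated sheaves already factor through $Z^1(W_E/P, \widehat{G})$ for some open $P$. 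After these checks, the two spectral arguments proceed independently and purely formally, and their conclusions combine to give the containment claimed.
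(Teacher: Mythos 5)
Your overall strategy is the one the paper intends: compute the bi-module Ext in two ways, once through the object $j^* T_{r_{\mu^{-1}}} Ri^b_* \rho$ that drives Theorem \ref{strong}, and once through $i^{b*} T_{r_{\mu}} j_* \pi$ together with the duality at infinite level that drives Corollary \ref{switch}, and then run the spectral-action annihilation argument on each side. The corollary is stated without a separate proof precisely because these two arguments combine. The problem is that the one genuinely new ingredient here --- the bookkeeping relating $\Ext^i_{G(F)\times J_b(F)}(X,\pi\boxtimes\rho)$ to the two partial objects --- is done with the wrong variance in your two displayed ``rewritings''. In general $R\Hom_{G(F)\times J_b(F)}(X,\pi\boxtimes\rho)$ is \emph{not} $R\Hom_{G(F)}\bigl(R\Hom_{J_b(F)}(X,\rho),\pi\bigr)$: already for $X=\pi\boxtimes\rho$ the left-hand side is $L$ by Schur's lemma, while your right-hand side becomes $\Hom_{G(F)}(\pi^{\vee},\pi)$-type and vanishes unless $\pi$ is self-dual. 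The correct manipulation places the auxiliary representation covariantly in the first slot: up to the usual admissibility/reflexivity points, one has $R\Hom_{G\times J_b}(X,\pi\boxtimes\rho)\cong R\Hom_{G(F)}\bigl(\pi^{\vee}, R\Hom_{J_b(F)}(X,\rho)\bigr)\cong R\Hom_{J_b(F)}\bigl(\rho^{\vee}, R\Hom_{G(F)}(X,\pi)\bigr)$. The occurrence of $\pi^{\vee}$ is not cosmetic; it is exactly why the remark following Corollary \ref{common} concludes that $\pi^{\vee}$ and $\rho$ (not $\pi$ and $\rho$) have the same Fargues--Scholze parameter.

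This variance issue also undermines your key step ``the same constraint descends through $\Ext^i_{G(F)}(-,\pi)$''. For a contravariant functor the induced $W_E$-action is $\gamma\cdot f = f\circ\gamma^{-1}$, so annihilation of the source object by $e_{\tau}^{d}\in L[W_E]$ only yields annihilation of the target by the antipode of $e_{\tau}^{d}$, which is controlled by the action of $e_{\tau}$ on \emph{dual} representations; taken literally, your rewritings plus a correct descent would prove that $\tau^{\vee}$, rather than $\tau$, appears in $r_{\mu^{-1}}\circ\varphi^{FS}_{\rho}$, which is not the statement. With the corrected covariant rewritings the functors $R\Hom_{G(F)}(\pi^{\vee},-)$ and $R\Hom_{J_b(F)}(\rho^{\vee},-)$ are honestly $W_E$-equivariant, the annihilation by $e_{\tau}^{d}$ passes unchanged, and the two runs of the Theorem \ref{strong} argument (specialised at $\varphi^{FS}_{\rho}$ on one side, and at $\varphi^{FS}_{\pi}$ on the other after the infinite-level duality) give both containments, with finite-dimensionality of the Ext groups supplied by the cited argument of Fargues--Scholze. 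So the route is the paper's, but as written the two identifications and the descent step need this repair before the argument is correct.
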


\begin{rem}
As mentioned above, excursion operators acts on
\begin{multline*}
R\Hom_{J_b (F)} (\varinjlim_K R\Gamma_c (\cM_{(G, b, \mu), K}, \bZ_{\ell}[q^{1/2}]), \rho)(-d/2) \\
\cong 
j^* T_{r_{\mu^{-1}}}(Ri_*^b \rho) \in D(G(\bQ_p), L) 
\end{multline*}
via the character $\cO (Z^1 (W_E/ P, \widehat{G})_{\Lambda})^{\widehat{G}}\to L$ corresponding to $\varphi_{\rho}$. 
Thus, if  
\[
\Ext^i_{G(F)\times J_b (F)} (\varinjlim_K R\Gamma_c (\cM_{(G, b, \mu), K}, \bZ_{\ell}[q^{1/2}]), \pi \boxtimes \rho)(-d/2)
\]
is nonzero for some $i$, then $\pi^{\vee}$ and $\rho$ have the same $L$-parameter of Fargues-Scholze. 
Let us also recall that taking the smooth dual of irreducible representations is compatible with applying the Chevalley involution of $\widehat{G}$ to $L$-parameters \cite{FS}*{IX.5.3}. 
\end{rem}

\section{Relation to Shimura varieties}\label{global}
We briefly explain how Theorem \ref{main} is related to the Eichler-Shimura relation for Shimura varieties. 
Let $(G, X)$ be a Shimura datum with the reflex field $E$ and assume that $G_{\bQ_p}$ is unramified with a hyperspecial open compact subgroup $K_p$. Take a sufficiently small open compact subgroup $K=K^p K_p \subset G(\bA_f)$. 
Let $\Sh (G, X)_K$ denote the associated Shimura variety of level $K$ with dimension $d$, defined over $E$. 
(We use the convention that the usual moduli problem gives rise to the canonical model in the PEL case.)
Choose $q^{1/2}\in \overline{\bQ}_{\ell}$. Fix a finite place $v$ of $E$ dividing $p$ and a compatible embedding $\overline{\bQ}\hookrightarrow \overline{\bQ}_p$. 
Under a suitable assumption, Mantovan's formula gives the following equality
\begin{multline*}
\sum_{i=0}^{2d} (-1)^i H^i_c (\Sh (G, X)_K, \overline{\bQ}_{\ell})(d/2) \\
= 
\sum_{b \in B(G_{\bQ_p}, \{\mu\})} \sum_{j\geq 0} (-1)^{j} \Ext^{-2d+j}_{J_b (\bQ_p)} (R\Gamma_c (\cM_{(G_{\bQ_p}, b, \mu), K_p}, \overline{\bQ}_{\ell}), R\Gamma_c (\textnormal{Ig}^b, \overline{\bQ}_{\ell}))(-d/2) 
\end{multline*}
in the Grothendick group of continuous representations of $W_{E_v}$, where $\textnormal{Ig}^b$ denotes the Igusa variety. 
(The action of $W_{E_v}$ on $R\Gamma_c (\textnormal{Ig}^b, \overline{\bQ}_{\ell})$ is trivial.)
In this sense, Theorem \ref{main} is compatible with the Eichler-Shimura relation for $\Sh (G, X)_{K}$ when the Mantovan's formula holds. 
(Note that the image of $E_v\hookrightarrow \overline{\bQ}_p$ is the reflex field of the local Shimura datum.)
In fact, known proofs of Mantovan's formula use excision sequences with respect to the Newton stratification. 
(When we mention Mantovan's formula below, we include such an argument.)
Therefore, one can actually deduce from Theorem \ref{main}, using the convention that Hecke algebra acts by left, 
 that
\begin{cor}
If Mantovan's formula holds, then the action of $\Frob_v$ on 
\[
H^i_c (\Sh (G, X)_K, \overline{\bQ}_{\ell})
\]
is killed by \emph{some power} of $H_{G_{\bQ_p},\mu^{-1}}$. 
\end{cor}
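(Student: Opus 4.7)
The plan is to combine Theorem \ref{main} with the excision argument implicit in known geometric proofs of Mantovan's formula. First, I would pass to the left-action reformulation of Theorem \ref{main}, which (by the remark following it) replaces $H_{G_{\bQ_p},\mu}$ with $H_{G_{\bQ_p},\mu^{-1}}$. Thus, for each $b \in B(G_{\bQ_p}, \{\mu\})$, the polynomial $H_{G_{\bQ_p},\mu^{-1}}(\Frob_v)$ annihilates $R\Gamma_c(\cM_{(G_{\bQ_p},b,\mu), K_p}, \overline{\bQ}_{\ell})$. Since $W_{E_v}$ acts trivially on $R\Gamma_c(\textnormal{Ig}^b, \overline{\bQ}_{\ell})$, the whole Frobenius action on
\[
\Ext^{-2d+j}_{J_b(\bQ_p)}(R\Gamma_c(\cM_{(G_{\bQ_p},b,\mu), K_p}, \overline{\bQ}_{\ell}), R\Gamma_c(\textnormal{Ig}^b, \overline{\bQ}_{\ell}))
\]
factors through the first argument, so $H_{G_{\bQ_p},\mu^{-1}}(\Frob_v)$ annihilates each of these Ext groups as well.

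Next, I would invoke the stated fact that known proofs of Mantovan's formula proceed via the Newton stratification of the special fiber and excision sequences on it. This produces, in the derived category of $W_{E_v}$-representations, a finite filtration (equivalently, a convergent spectral sequence) on $R\Gamma_c(\Sh(G,X)_K, \overline{\bQ}_{\ell})$ whose graded pieces are the shifted and twisted Ext complexes appearing on the right-hand side of Mantovan's formula, indexed by $b \in B(G_{\bQ_p}, \{\mu\})$. Passing to $H^i_c$, we obtain a $W_{E_v}$-stable finite filtration whose graded pieces are annihilated by $H_{G_{\bQ_p},\mu^{-1}}(\Frob_v)$. A routine induction on the filtration length $N$ then gives that $H_{G_{\bQ_p},\mu^{-1}}(\Frob_v)^N$ kills $H^i_c(\Sh(G,X)_K, \overline{\bQ}_{\ell})$, where $N$ can be taken to be the number of non-empty Newton strata relevant to $\{\mu\}$.

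The main obstacle is not conceptual but rather one of bookkeeping: one must check that the $W_{E_v}$-action on each graded piece of the excision filtration is genuinely the one inherited from the local Shimura variety (so that Theorem \ref{main} applies verbatim) and that the filtration has bounded length. Both points are implicit in the geometric proof of Mantovan's formula but not in its bare Grothendieck-group formulation; this is precisely why the corollary has to assert annihilation by \emph{some power} of $H_{G_{\bQ_p},\mu^{-1}}$ rather than by the polynomial itself.
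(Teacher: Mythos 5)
Your overall architecture is the one the paper intends: Theorem \ref{main} applied stratum by stratum, triviality of the $W_{E_v}$-action on Igusa cohomology, the excision filtration by Newton strata underlying known proofs of Mantovan's formula, and induction on the (finite) length of that filtration — which is exactly why only \emph{some power} of the Hecke polynomial is asserted.

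However, one step is not valid as written: the passage from $R\Gamma_c(\cM_{(G_{\bQ_p},b,\mu),K_p},\overline{\bQ}_\ell)$ to the Ext terms. The functor $\Ext^{*}_{J_b(\bQ_p)}(-,R\Gamma_c(\textnormal{Ig}^b,\overline{\bQ}_\ell))$ is contravariant in the first variable; since the Weil action on the target is trivial, $\Frob_v$ acts on the Ext group by precomposition with $\Frob_v^{-1}$, and the right Hecke action on $R\Gamma_c(\cM)$ induces by precomposition the natural left Hecke action on the Ext group. Consequently, from the relation of Theorem \ref{main} one obtains a priori that the \emph{reciprocal} (coefficient-reversed) polynomial of $H_{G_{\bQ_p},\mu}$ annihilates $\Frob_v$ on the Ext group, not that a fixed annihilating polynomial passes through unchanged. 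Identifying this reciprocal polynomial, after the Tate twist occurring in Mantovan's formula, with $H_{G_{\bQ_p},\mu^{-1}}$ is precisely where the switch $\mu\mapsto\mu^{-1}$ and the normalization $q^{[E:F]d/2}$ enter; it is not legitimate to first invoke the remark after Theorem \ref{main} (the involution $\iota$, which converts the right-module statement on $R\Gamma_c(\cM)$ itself into one with $H_{G_{\bQ_p},\mu^{-1}}$) and then push that polynomial covariantly through the Ext — the left Hecke structure on the Ext is induced from the \emph{right} action on the first argument, so doing both risks double-counting the left/right switch. To repair the step, work with the right-module form of Theorem \ref{main} and carry out the dualization explicitly (inverse Frobenius, reversed polynomial, Tate twist), checking that the outcome is $H_{G_{\bQ_p},\mu^{-1}}$ in the left-action convention used for the Shimura variety. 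The remaining part of your argument (the $W_{E_v}$-stable filtration with graded pieces the Ext complexes, and the induction giving the exponent $N$) is fine and matches the paper.
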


Mantovan's formula itself actually can hold for more general level $K_p$, and we can conclude a more precise result from Corollary \ref{switch}.   
(See \cite{Hamacher-Kim} for cases where Mantovan's formula holds.)
So, we allow general $K_p$ from now on. 
Assume $\Sh (G, X)_K$ are proper over $E$, for simplicity, and, using Matsushima's formula, 
write
\[
\varinjlim_{K\subset G(\bA_f)} H^i (\Sh (G, X)_K, \overline{\bQ}_{\ell})(d/2)=\bigoplus_{\pi_f} \pi_f \otimes  H^i (\pi_f), 
\]
where $\pi_f$ runs through irreducible admissible representations of $G(\bA_f)$, and $H^i (\pi_f)$ is a representation of $\Gal (\overline{\bQ}/E)$. 
We can deduce from Corollary \ref{switch} and (the proof of) Mantovan's formula that 

\begin{thm}
Assume that $\Sh (G, X)_K$ are proper over $E$ and Mantovan's formula holds true. If $\tau$ is an irreducible constituent of $H^i (\pi_f)$ as a representation of $W_{E_v}$, then $\tau$ appears in $r_{\mu^{-1}}\circ \varphi_{\pi_p}$. 
\end{thm}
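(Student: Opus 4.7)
The plan is to combine the proof of Mantovan's formula (via excision on the Newton stratification of the integral model at $v$) with the local results of the previous section, especially Corollary \ref{common}, to constrain the $W_{E_v}$-structure of $H^i(\pi_f)$. Since $\Sh(G, X)_K$ is proper, Matsushima's formula identifies $H^i(\pi_f)$ with the $\pi_f$-isotypic component of $\varinjlim_K H^i_c(\Sh(G, X)_K, \overline{\bQ}_\ell)(d/2)$, and I isolate it by taking first the $\pi_f^p$-isotypic component under $G(\bA_f^p)$ and then the $\pi_p$-isotypic component under $G(\bQ_p)$.

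For the reduction to the local setting, the proof of Mantovan's formula expresses $\varinjlim_K R\Gamma_c(\Sh(G,X)_K, \overline{\bQ}_\ell)(d/2)$ as an iterated extension in the derived category of $G(\bA_f) \times W_{E_v}$-modules, with graded pieces
\[
\bigoplus_{b \in B(G_{\bQ_p}, \{\mu\})} R\Hom_{J_b(\bQ_p)}\bigl(\varinjlim_{K_p} R\Gamma_c(\cM_{(G_{\bQ_p}, b, \mu), K_p}, \overline{\bQ}_\ell)[2d], R\Gamma_c(\textnormal{Ig}^b, \overline{\bQ}_\ell)\bigr)(-d/2).
\]
Taking $\pi_f^p$-isotypic replaces $R\Gamma_c(\textnormal{Ig}^b, \overline{\bQ}_\ell)$ by a complex $\rho_b$ of finite-length smooth $J_b(\bQ_p)$-representations. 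Taking then the $\pi_p$-isotypic component and applying derived tensor--hom adjunction in the category of smooth $G(\bQ_p) \times J_b(\bQ_p)$-representations exhibits every irreducible $W_{E_v}$-constituent $\tau$ of $H^i(\pi_f)$ as a constituent of some
\[
\Ext^{j}_{G(\bQ_p) \times J_b(\bQ_p)}\bigl(\varinjlim_{K_p} R\Gamma_c(\cM_{(G_{\bQ_p}, b, \mu), K_p}, \overline{\bQ}_\ell), \pi_p \boxtimes \rho\bigr)(-d/2)
\]
for some $b$ and some irreducible $J_b(\bQ_p)$-subquotient $\rho$ of $\rho_b$.

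Corollary \ref{common} then implies that $\tau$ appears in $r_\mu \circ \varphi^{FS}_{\pi_p}$; moreover, the nonvanishing of this Ext forces $\varphi^{FS}_{\pi_p^\vee} = \varphi^{FS}_\rho$ by the closing remark of the previous section. Combining with the Chevalley-involution compatibility of smooth dual with $L$-parameters (\cite{FS}*{IX.5.3}), the identity $r_\mu \circ \theta \simeq r_{\mu^{-1}}$ (where $\theta$ denotes the Chevalley involution of $\widehat{G}$), and the agreement of the semisimplified $\varphi^{FS}_{\pi_p}$ with the classical parameter $\varphi_{\pi_p}$, we conclude that $\tau$ appears in $r_{\mu^{-1}} \circ \varphi_{\pi_p}$. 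The main obstacle is the derived tensor--hom step collapsing the iterated Ext into a single Ext over $G(\bQ_p) \times J_b(\bQ_p)$: controlling admissibility uniformly along the spectral sequence requires bookkeeping analogous to (but slightly more delicate than) that in the proofs of Corollaries \ref{switch} and \ref{common}, and rests on admissibility of $\rho_b$ in each cohomological degree together with admissibility of $\varinjlim_{K_p} R\Gamma_c(\cM)$ as a smooth $G(\bQ_p)$-representation in each degree.
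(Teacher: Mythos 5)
There is a genuine gap, and it is a duality/variance one rather than the admissibility bookkeeping you flag. With the paper's convention $H^i=\bigoplus_{\pi_f}\pi_f\otimes H^i(\pi_f)$, isolating the $\pi_f$-part of the graded pieces $R\Hom_{J_b(\bQ_p)}(\varinjlim_{K_p}R\Gamma_c(\cM),\,\rho)$ does \emph{not} lead to $\Ext^j_{G(\bQ_p)\times J_b(\bQ_p)}(\varinjlim_{K_p}R\Gamma_c(\cM),\,\pi_p\boxtimes\rho)$: tensor--hom adjunction applied to $\Hom_{G}(\pi_p,\,R\Hom_{J_b}(C,\rho))$ produces $\pi_p^{\vee}$ in the target (equivalently, what you actually know is that $\pi_p\boxtimes\tau$ occurs as a \emph{constituent} of the cohomology of $R\Hom_{J_b}(C,\rho)(-d/2)$). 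This matters in two ways. First, with $\pi_p$ (no dual) in the target the Ext group you write down can vanish identically (e.g.\ when $\pi_p$ and the constituents of $R\Hom_{J_b}(C,\rho)$ lie in Bernstein blocks dual to each other), so the claim that every $\tau$ is ``exhibited'' there is not a bookkeeping issue but can simply fail; and in general ``constituent of the cohomology'' does not imply nonvanishing of a Hom/Ext against that constituent, so this step needs an actual argument in any variance. Second, even granting your identification, the chain you then run does not give the stated conclusion: Corollary \ref{common} plus the closing Remark plus $\varphi^{FS}_{\pi_p^\vee}=\theta\circ\varphi^{FS}_{\pi_p}$ and $r_\mu\circ\theta\simeq r_{\mu^{-1}}$ yields that $\tau$ appears in $r_\mu\circ\varphi^{FS}_{\pi_p}\cong r_{\mu^{-1}}\circ\varphi^{FS}_{\pi_p^{\vee}}$, which differs from the theorem's $r_{\mu^{-1}}\circ\varphi_{\pi_p}$ by the Chevalley involution; as written the last sentence is a non sequitur, with the dual attached to the wrong object.

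The clean repairs are either the paper's route --- split the $\pi_f$-part of the graded pieces so that the local input is $\Ext^*_{G(\bQ_p)}(\varinjlim_{K_p}R\Gamma_c(\cM),\pi_p^{\vee})(-d/2)$ and apply Corollary \ref{switch} to $\pi_p^{\vee}$, giving $r_\mu\circ\varphi^{FS}_{\pi_p^{\vee}}\cong r_{\mu^{-1}}\circ\varphi^{FS}_{\pi_p}$ --- or, avoiding the isotypic extraction at $p$ altogether: apply Theorem \ref{strong} (in its colimit form) directly to $R\Hom_{J_b}(C,\rho)(-d/2)$, whose $W_{E_v}$-constituents it controls, so $\tau$ appears in $r_{\mu^{-1}}\circ\varphi^{FS}_{\rho}$; then use that the excursion operators act on this object through the character of $\varphi_\rho$ (the closing Remark), so every irreducible $G(\bQ_p)$-constituent of its cohomology, in particular $\pi_p$, has Fargues--Scholze parameter $\varphi^{FS}_{\rho}$, whence $r_{\mu^{-1}}\circ\varphi^{FS}_{\rho}=r_{\mu^{-1}}\circ\varphi^{FS}_{\pi_p}$. (The prime-to-$p$ extraction is harmless because localization at the away-from-$p$ Hecke eigensystem is exact.) Finally, drop the appeal to ``agreement of $\varphi^{FS}_{\pi_p}$ with the classical parameter'': for general $K_p$ no such comparison is available, and none is needed, since $\varphi_{\pi_p}$ in the statement is to be read as the Fargues--Scholze parameter.
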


\begin{bibdiv}
\begin{biblist}
\bib{Blasius-Rogawski}{article}{
   author={Blasius, Don},
   author={Rogawski, Jonathan D.},
   title={Zeta functions of Shimura varieties},
   conference={
      title={Motives},
      address={Seattle, WA},
      date={1991},
   },
   book={
      series={Proc. Sympos. Pure Math.},
      volume={55},
      publisher={Amer. Math. Soc., Providence, RI},
   },
   date={1994},
   pages={525--571},
   review={\MR{1265563}},
}
\bib{Boyer}{article}{
   author={Boyer, Pascal},
   title={Monodromie du faisceau pervers des cycles \'{e}vanescents de quelques
   vari\'{e}t\'{e}s de Shimura simples},
   language={French, with English and French summaries},
   journal={Invent. Math.},
   volume={177},
   date={2009},
   number={2},
   pages={239--280},
   issn={0020-9910},
   review={\MR{2511742}},
   doi={10.1007/s00222-009-0183-9},
}
\bib{Dat}{article}{
   author={Dat, J.-F.},
   title={Th\'{e}orie de Lubin-Tate non-ab\'{e}lienne et repr\'{e}sentations
   elliptiques},
   language={French, with English summary},
   journal={Invent. Math.},
   volume={169},
   date={2007},
   number={1},
   pages={75--152},
   issn={0020-9910},
   review={\MR{2308851}},
   doi={10.1007/s00222-007-0044-3},
}
\bib{Dat:finitude}{article}{
   author={Dat, Jean-Francois},
   title={Finitude pour les repr\'{e}sentations lisses de groupes $p$-adiques},
   language={French, with English and French summaries},
   journal={J. Inst. Math. Jussieu},
   volume={8},
   date={2009},
   number={2},
   pages={261--333},
   issn={1474-7480},
   review={\MR{2485794}},
   doi={10.1017/S1474748008000054},
}
\bib{Dat:integral}{article}{
   author={Dat, J.-F.},
   title={Th\'{e}orie de Lubin-Tate non Ab\'{e}lienne $\ell$-enti\`ere},
   language={French, with English and French summaries},
   journal={Duke Math. J.},
   volume={161},
   date={2012},
   number={6},
   pages={951--1010},
   issn={0012-7094},
   review={\MR{2913099}},
   doi={10.1215/00127094-1548425},
}
\bib{FS}{article}{
   author={Fargues, Laurent},
    author={Scholze, Peter},
   title={Geometrization of the local Langlands correspondence},
   eprint={https://arxiv.org/abs/2102.13459}, 
}
\bib{Haines}{article}{
   author={Haines, Thomas J.},
   title={The stable Bernstein center and test functions for Shimura
   varieties},
   conference={
      title={Automorphic forms and Galois representations. Vol. 2},
   },
   book={
      series={London Math. Soc. Lecture Note Ser.},
      volume={415},
      publisher={Cambridge Univ. Press, Cambridge},
   },
   date={2014},
   pages={118--186},
   review={\MR{3444233}},
}
\bib{Hamacher-Kim}{article}{
   author={Hamacher, Paul},
   author={Kim, Wansu},
   title={$l$-adic \'{e}tale cohomology of Shimura varieties of Hodge type with
   non-trivial coefficients},
   journal={Math. Ann.},
   volume={375},
   date={2019},
   number={3-4},
   pages={973--1044},
   issn={0025-5831},
   review={\MR{4023369}},
   doi={10.1007/s00208-019-01815-6},
}
\bib{Imai}{article}{
   author={Imai, Naoki},
   title={Convolution morphisms and Kottwitz conjecture},
   eprint={https://arxiv.org/abs/1909.02328}, 
}

\bib{Lafforgue:main}{article}{
   author={Lafforgue, Vincent},
   title={Chtoucas pour les groupes r\'{e}ductifs et param\'{e}trisation de
   Langlands globale},
   language={French},
   journal={J. Amer. Math. Soc.},
   volume={31},
   date={2018},
   number={3},
   pages={719--891},
   issn={0894-0347},
   review={\MR{3787407}},
   doi={10.1090/jams/897},
}
\if0
\bib{Lafforgue:icm}{article}{
   author={Lafforgue, Vincent},
   title={Shtukas for reductive groups and Langlands correspondence for
   function fields},
   conference={
      title={Proceedings of the International Congress of
      Mathematicians---Rio de Janeiro 2018. Vol. I. Plenary lectures},
   },
   book={
      publisher={World Sci. Publ., Hackensack, NJ},
   },
   date={2018},
   pages={635--668},
   review={\MR{3966741}},
}
\bib{Lee}{article}{
   author={Lee, Si Ying},
   title={Eichler-Shimura Relations for Shimura Varieties of Hodge Type},
   eprint={https://arxiv.org/abs/2006.11745}, 
}
\fi
\bib{RV}{article}{
   author={Rapoport, Michael},
   author={Viehmann, Eva},
   title={Towards a theory of local Shimura varieties},
   journal={M\"{u}nster J. Math.},
   volume={7},
   date={2014},
   number={1},
   pages={273--326},
   issn={1867-5778},
   review={\MR{3271247}},
}
\bib{Scholze:Berkeley}{book}{
    author={Scholze, Peter},
    author={Weinstein, Jared},
    title={Berkeley lectures on $p$-adic geometry}, 
   series={Annals of Mathematics Studies},
   volume={389},
   publisher={Princeton University Press, Princeton, NJ},
   date={2020},
  }
\bib{Treumann-Venkatesh}{article}{
   author={Treumann, David},
   author={Venkatesh, Akshay},
   title={Functoriality, Smith theory, and the Brauer homomorphism},
   journal={Ann. of Math. (2)},
   volume={183},
   date={2016},
   number={1},
   pages={177--228},
   issn={0003-486X},
   review={\MR{3432583}},
   doi={10.4007/annals.2016.183.1.4},
}
\bib{Vigneras:book}{book}{
   author={Vign\'{e}ras, Marie-France},
   title={Repr\'{e}sentations $l$-modulaires d'un groupe r\'{e}ductif $p$-adique
   avec $l\ne p$},
   language={French, with English summary},
   series={Progress in Mathematics},
   volume={137},
   publisher={Birkh\"{a}user Boston, Inc., Boston, MA},
   date={1996},
   pages={xviii and 233},
   isbn={0-8176-3929-2},
   review={\MR{1395151}},
}
\bib{Vigneras}{article}{
   author={Vign\'{e}ras, Marie-France},
   title={Correspondance de Langlands semi-simple pour ${\rm GL}(n,F)$
   modulo $\ell \not= p$},
   language={French},
   journal={Invent. Math.},
   volume={144},
   date={2001},
   number={1},
   pages={177--223},
   issn={0020-9910},
   review={\MR{1821157}},
   doi={10.1007/s002220100134},
}
\bib{Wedhorn}{article}{
   author={Wedhorn, Torsten},
   title={Congruence relations on some Shimura varieties},
   journal={J. Reine Angew. Math.},
   volume={524},
   date={2000},
   pages={43--71},
   issn={0075-4102},
   review={\MR{1770603}},
   doi={10.1515/crll.2000.060},
}
\end{biblist}
\end{bibdiv}

\end{document}